\providecommand{\U}[1]{\protect\rule{.1in}{.1in}}
\DeclareMathSymbol{\subsetneqq}{\mathbin}{AMSb}{36}
\theoremstyle{plain} \numberwithin{equation}{section}
\newtheorem{theorem}{Theorem}[section]
\newtheorem{lemma}{Lemma}[section]
\begin{document}
\title[Asymptotic for the perturbed heavy ball system]{Asymptotic for the perturbed heavy ball system with vanishing damping term}
\author{}
\maketitle

\begin{center}

\bigskip

Mounir BALTI$^{1,2}$ \& Ramzi MAY$^{3}$

\thinspace\thinspace$^{1}$Institut Pr\'eparatoire aux Etude Scientifiques et Techniques

Universit\'e de Carthage

Bp 51 La Marsa

Tunisia

$^{2}$Facult\'e des Sciences de Tunis

Universit\'e de Tunis Al Manar Tunis

Laboratoire EDP-LR03ES04

Tunisia

E-mail: mounir.balti@gmail.com

$^{3}$ Ramzi May

College of Sciences

Department of Mathematics and Statistics

King Faisal University

P.O. 400 Al Ahsaa 31982, Kingdom of Saudi Arabia

E-mail: rmay@kfu.edu.sa

\end{center}

\noindent\textbf{Abstract:} We investigate the long time behavior of solutions
to the differential equation:%
\begin{equation}
\ddot{x}(t)+\frac{c}{\left(  t+1\right)  ^{\alpha}}\dot{x}(t)+\nabla
\Phi\left(  x(t)\right)  =g(t),~t\geq0, \label{eq}%
\end{equation}
where $c$ is nonnegative constant, $\alpha\in\lbrack0,1[,$ $\Phi$ is a $C^{1}$
convex function on a Hilbert space $\mathcal{H}$ and $g\in L^{1}%
(0,+\infty;\mathcal{H}).$ We obtain sufficient conditions on the source term
$g(t)$ ensuring the weak or the strong convergence of any trajectory $x(t)$ of
(\ref{eq}) as $t\rightarrow+\infty$ to a minimizer of the function $\Phi$ if
one exists.\medskip

\noindent{\textbf{keywords}: Differential equation, asymptotically small
dissipation, asymptotic behavior, energy function, convex function.\medskip}

\noindent{\small \textbf{AMS classification numbers:}} 34G20, 35B40, 35L71, 34D05.

\section{Introduction and main results}

Let $\mathcal{H}$ be a real Hilbert space with inner product and norm
respectively denoted by $\left\langle .,.\right\rangle $ and $\left\Vert
.\right\Vert .$ In this paper, we consider the following second order
equation:
\begin{equation}
\ddot{x}(t)+\gamma\left(  t\right)  \dot{x}(t)+\nabla\Phi\left(  x(t)\right)
=g(t),~t\geq0, \label{aa}%
\end{equation}
where $\gamma(t)=\frac{c}{(1+t)^{\alpha}}$ with $c>0$ and $\alpha\in
\lbrack0,1[,$ $g\in L^{1}(0,+\infty;\mathcal{H})$ and $\Phi:\mathcal{H}%
\rightarrow\mathbb{R}$ is a $C^{1}$ convex function such that its minimizers
subset
\[
\arg\min\Phi:=\{v\in\mathcal{H}:\Phi(v)=\Phi^{\ast}M=\min_{x\in\mathcal{H}%
}\Phi(x)\}
\]
is not empty.

Using classical arguments (see for instance \cite{7}), one can easily prove
that if the function $\nabla\Phi:\mathcal{H}\rightarrow\mathcal{H}$ is
Lipschitz on bounded subset of $\mathcal{H},$ then for any initial data
$(x_{0},x_{1})\in\mathcal{H}\times\mathcal{H},$ the equation (\ref{aa}) has a
unique global solution $x\in W_{loc}^{2,1}(0,+\infty;\mathcal{H})$ satisfying
$(x(0),\dot{x}(0))=(x_{0},x_{1})$. Moreover, the associated energy function
\begin{equation}
W(t)=\frac{1}{2}\left\Vert \dot{x}(t)\right\Vert ^{2}+\Phi(x(t))-\Phi^{\ast
}\label{energy}%
\end{equation}
is nonincreasing and converges to $0$\ as $t\rightarrow+\infty.$ Hence
hereafter, we will assume that $x\in W_{loc}^{2,1}(0,+\infty;\mathcal{H})$ is
a solution to (\ref{aa}) and we will focus our attention on the study of the
asymptotic behavior of $x(t)$ as $t\rightarrow+\infty$ and on the rate of
convergence of the energy function $W.$

Before setting the main results of our present paper, let us first recall some
previous results: In the pioneer paper \cite{2}, Alvarez considered the case
where $\alpha=0$ and $g=0.$ He proved that $x(t)$ converges weakly in
$\mathcal{H}$ as $t\rightarrow+\infty$ to a minimizer of the function $\Phi.$
Moreover, he showed that the convergence is strong if either the function
$\Phi$ is even or the interior of the set $\arg\min\Phi$ is not empty. In
\cite{6}, Haraux and Jendoubi extended the weak convergence result of Alvarez
to the case where the source term $g$\ belongs to the space $L^{1}%
(0,+\infty;\mathcal{H}).$ Recently, Cabot and Frankel \cite{5} studied
(\ref{aa}) where $g=0$ and $\alpha\in]0,1[.$ They proved that every
\underline{bounded} solution $x(t)$ (i.e. $x\in L^{\infty}(0,+\infty
;\mathcal{H})$) converges weakly toward a critical point of $\Phi.$ In a very
recent work \cite{8}, the second author of this paper improved the result of
Cabot and Frankel by getting rid of the superfluous hypothesis on the
boundedness of the solution. Moreover he established that $W(t)=\circ(\frac
{1}{t^{2\bar{\alpha}}})$ as $t\rightarrow+\infty$ for every $\bar{\alpha
}<\alpha.$ In \cite{7}, Jendoubi and May proved that the main convergence
result of Cabot and Frankel remains true if the source term $g$ satisfies the
condition $\int_{0}^{+\infty}(1+t)\left\Vert g(t)\right\Vert dt<\infty$.
Recently, this result was improved in \cite{1}. In fact, we proved that if the
solution $x(t)$ is \underline{bounded} and the function $g$ satisfies the
optimal condition
\begin{equation}
\int_{0}^{+\infty}(1+t)^{\alpha}\left\Vert g(t)\right\Vert dt<\infty,
\label{opt}%
\end{equation}
then $x(t)$ converges weakly to some element of $\arg\min\Phi$ and
$W(t)=\circ(\frac{1}{t^{2\alpha}})$ as $t\rightarrow+\infty.$ One of the main
purpose of this paper is to prove that the sole assumption (\ref{opt})
guarantees the boundedness (and therefore the weak convergence) of the
solution $x(t).$ We notice that, in a very recent work \cite{3}, Attouch,
Chbani, Peypouquet and Redont have considered the equation (\ref{aa}) in the
case $\alpha=1.$ They have proved that if $c>3$ and $\int_{0}^{+\infty
}(1+t)\left\Vert g(t)\right\Vert dt<\infty$ then $x(t)$ converges weakly to
some element of $\arg\min\Phi$ and that $W(t)=O(\frac{1}{t^{2}})$. Moreover,
they have established the strong convergence of $x(t)$ in the case where the
function $\Phi$ is even or the interior of the subset $\arg\min\Phi$ is not
empty. In this paper, we extend their results to the case $\alpha<1.$

Our main first result is the weak convergence of the trajectories of
(\ref{aa}) under the optimal condition (\ref{opt})\ on the source term $g.$

\begin{theorem}
\label{Th1}Assume that $\int_{0}^{+\infty}\left(  t+1\right)  ^{\alpha
}\left\Vert g(t)\right\Vert dt<\infty.$ Then $x(t)$ converges weakly in
$\mathcal{H}$ as $t\rightarrow+\infty$ to some $x^{\ast}\in\arg\min\Phi.$
Moreover the energy function $W$ satisfies the two following properties:%
\begin{equation}
W(t)=O\left(  \frac{1}{t^{2\alpha}}\right)  \label{c1}%
\end{equation}%
\begin{equation}
\int_{0}^{+\infty}(1+t)^{\alpha}W(t)dt<\infty. \label{c2}%
\end{equation}

\end{theorem}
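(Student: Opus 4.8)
The plan is to establish the three conclusions — the energy decay estimate, the integrability condition, and the weak convergence — in a logical order where the global boundedness of the trajectory is the pivotal step that unlocks everything else. The natural tool throughout is the energy function $W$ together with a carefully chosen auxiliary Lyapunov functional that incorporates the damping weight $\gamma(t)=c/(1+t)^{\alpha}$.

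\medskip

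First I would differentiate the energy $W(t)=\frac{1}{2}\|\dot{x}\|^{2}+\Phi(x)-\Phi^{\ast}$ along the trajectory. Using the equation \eqref{aa}, one finds $\dot{W}(t)=\langle\ddot{x}+\nabla\Phi(x),\dot{x}\rangle=-\gamma(t)\|\dot{x}\|^{2}+\langle g(t),\dot{x}\rangle$. Since $\|\dot{x}\|\le\sqrt{2W}$, this gives the differential inequality $\dot{W}\le \|g(t)\|\sqrt{2W}$, and a Gronwall-type argument on $\sqrt{W}$ yields $\sqrt{W(t)}\le \sqrt{W(0)}+\frac{1}{\sqrt{2}}\int_{0}^{t}\|g(s)\|\,ds$. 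Because $g\in L^{1}$, this already shows $W$ is bounded and in particular $\dot{x}\in L^{\infty}$. To upgrade this to the sharp rate \eqref{c1}, the idea is to weight by the damping: consider $E(t)=(1+t)^{2\alpha}W(t)$ (or a closely related quantity), compute its derivative, and exploit the dissipation term $-\gamma\|\dot{x}\|^{2}$ to control the growth coming from the factor $(1+t)^{2\alpha}$. The hypothesis $\int(1+t)^{\alpha}\|g\|\,dt<\infty$ is precisely what is needed so that the forcing contribution to $\dot{E}$ is integrable, forcing $E$ to stay bounded and giving \eqref{c1}.

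\medskip

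The main obstacle, and the genuinely new contribution announced in the introduction, is proving that the trajectory $x(t)$ itself is bounded without assuming it a priori. Here I would introduce the anchored functional $h(t)=\frac{1}{2}\|x(t)-p\|^{2}$ for a fixed $p\in\arg\min\Phi$, whose derivative is $\dot h=\langle x-p,\dot x\rangle$ and $\ddot h=\|\dot x\|^{2}+\langle x-p,\ddot x\rangle$. Substituting $\ddot x=-\gamma\dot x-\nabla\Phi(x)+g$ and using convexity in the form $\langle x-p,\nabla\Phi(x)\rangle\ge\Phi(x)-\Phi(p)=\Phi(x)-\Phi^{\ast}\ge0$ yields a differential inequality of the type $\ddot h+\gamma(t)\dot h\le \|\dot x\|^{2}+\|x-p\|\,\|g(t)\|$. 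Combining this with the energy decay \eqref{c1} (so that $\|\dot x\|^{2}\le 2W$ is integrable against the right weight) and the source assumption, one integrates the inequality for $\dot h$ using the integrating factor $\exp(\int_0^t\gamma)$; the delicate point is that since $\alpha<1$ this factor grows like $\exp(\frac{c}{1-\alpha}t^{1-\alpha})$, so one must show the accumulated forcing terms remain controlled so that $h$ cannot blow up. Establishing $\int_0^{+\infty}(1+t)^{\alpha}W(t)\,dt<\infty$, i.e. \eqref{c2}, feeds directly into this estimate and is proved in tandem, typically by integrating the weighted energy identity $\frac{d}{dt}\bigl((1+t)^{\alpha}\,\text{(energy-type term)}\bigr)$ and bounding the error terms by the $L^1$ quantities already in hand.

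\medskip

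Once boundedness of $x(t)$ is secured, the weak convergence follows from a standard Opial-type lemma. I would verify its two hypotheses: first, that $\lim_{t\to\infty}\|x(t)-p\|$ exists for every $p\in\arg\min\Phi$, which comes from showing $\dot h\in L^1$ (equivalently that $h$ has a limit) via the integrability of $\|\dot x\|^2$ against the damping weight and \eqref{c2}; and second, that every weak sequential limit point of $x(t)$ lies in $\arg\min\Phi$, which follows from $W(t)\to 0$ together with the weak lower semicontinuity of $\Phi$, giving $\Phi(x_\infty)\le\Phi^{\ast}$. Opial's lemma then delivers a unique weak limit $x^{\ast}\in\arg\min\Phi$, completing the proof. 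I expect the boundedness argument in the regime $\alpha<1$ to be the technical heart, precisely because the damping $\gamma(t)$ vanishes and the integrating factor grows subexponentially, so the bookkeeping of the weighted $L^1$ estimates must be done with care.
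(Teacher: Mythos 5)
Your overall architecture (energy inequality, anchored functional, Opial's lemma) contains the right ingredients, but the two pivotal estimates rest on arguments that fail, and as ordered the scheme is circular. First, for $E(t)=(1+t)^{2\alpha}W(t)$ one has
\begin{equation*}
\dot E(t)=2\alpha(1+t)^{2\alpha-1}\Bigl(\tfrac12\|\dot x\|^{2}+\Phi(x)-\Phi^{\ast}\Bigr)-c(1+t)^{\alpha}\|\dot x\|^{2}+(1+t)^{2\alpha}\langle g,\dot x\rangle .
\end{equation*}
The dissipation term can absorb the kinetic part of the first bracket (since $2\alpha-1<\alpha$), but nothing absorbs $2\alpha(1+t)^{2\alpha-1}\bigl(\Phi(x)-\Phi^{\ast}\bigr)$: energy dissipation never controls the potential term. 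Making that term integrable requires precisely \eqref{c2} as an input --- this is exactly how the paper proves Theorem \ref{Th2}, using \eqref{c2} supplied by Theorem \ref{Th1} --- so you cannot obtain \eqref{c1} this way before having \eqref{c2}. Symmetrically, your route to \eqref{c2}, ``integrating the weighted energy identity,'' fails for the same reason: the only negative term in any weighted energy identity is the weight times $-\gamma\|\dot x\|^{2}$, so it can only ever yield weighted integrability of $\|\dot x\|^{2}$, never of $\Phi(x)-\Phi^{\ast}$. Finally, your boundedness step needs the forcing $\|\dot x\|^{2}$ in $\ddot h+\gamma\dot h\le\|\dot x\|^{2}+\|x-p\|\,\|g\|$ to be integrable against the weight $1/\gamma\sim(1+t)^{\alpha}/c$, i.e.\ $\int_{0}^{+\infty}(1+t)^{\alpha}\|\dot x\|^{2}dt<\infty$; but \eqref{c1} only gives $\|\dot x\|^{2}=O(t^{-2\alpha})$, and $\int(1+t)^{\alpha}t^{-2\alpha}dt=\int t^{-\alpha}dt=+\infty$ for $\alpha<1$. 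So boundedness needs \eqref{c2}, \eqref{c2} needs the anchored convexity argument, and your anchored argument needs \eqref{c1}: the circle never closes.

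The missing idea, which breaks the circle, is a single \emph{coupled} Lyapunov functional. With $\Gamma(t)=\int_{0}^{t}\gamma(s)ds$ and $h(t)=e^{\Gamma(t)}\int_{t}^{+\infty}e^{-\Gamma(s)}ds$ (so that $h'=\gamma h-1$ and $h\sim1/\gamma$), the paper sets
\begin{equation*}
\mathcal E(t)=2h^{2}(t)\bigl(\Phi(x(t))-\Phi^{\ast}\bigr)+\bigl\Vert x(t)-x^{\ast}+h(t)\dot x(t)\bigr\Vert^{2}-2\int_{0}^{t}h(s)\bigl\langle g(s),x(s)-x^{\ast}+h(s)\dot x(s)\bigr\rangle ds .
\end{equation*}
The cross term $2h\langle x-x^{\ast},\dot x\rangle$ hidden inside the squared norm is essential: upon differentiating along the flow, all terms involving $\ddot x$ and $\gamma\dot x$ cancel through $h'=\gamma h-1$, and convexity ($\langle\nabla\Phi(x),x-x^{\ast}\rangle\ge\Phi(x)-\Phi^{\ast}$) leaves $\mathcal E'\le2(2h'-1)h(\Phi-\Phi^{\ast})\le-h(\Phi-\Phi^{\ast})$ for $t$ large. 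A Gronwall--Bellman argument on this one inequality then yields simultaneously the boundedness of $x-x^{\ast}+h\dot x$ and the potential half of \eqref{c2}, namely $\int h(\Phi-\Phi^{\ast})dt<\infty$; only afterwards does the $h^{2}$-weighted energy estimate plus Gronwall give \eqref{c1}, $\int h\|\dot x\|^{2}dt<\infty$, hence \eqref{c2} and the boundedness of $x$, after which your Opial argument goes through as you describe. Treating the energy and the anchor as two separate inequalities, as you do, loses exactly the cancellation that makes all of this work.
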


Our second theorem improves the result on the convergence rate of the energy
function $W$\ obtained in \cite{8} in the case where $g=0$ and it will be
useful in the proof of the strong convergence of the solution $x(t)$ when the
convex function $\Phi$ is even.

\begin{theorem}
\label{Th2}Assume that $\int_{0}^{+\infty}(1+t)^{\nu}\left\Vert
g(t)\right\Vert dt<\infty$ where $\nu\in\lbrack\alpha,\frac{1+\alpha}{2}].$
Then
\begin{equation}
W(t)=o(\frac{1}{t^{2\nu}})\text{ as }t\rightarrow+\infty, \label{eq0}%
\end{equation}%
\begin{equation}
\int_{0}^{+\infty}(1+t)^{2\nu-\alpha}\left\Vert \dot{x}(t)\right\Vert
^{2}dt<\infty. \label{eq00}%
\end{equation}

\end{theorem}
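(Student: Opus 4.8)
The plan is to run a single weighted Lyapunov analysis that yields the $O$-bound on $W$ together with \eqref{eq00}, and then to upgrade $O$ to $o$ by a short integrated-identity argument. Throughout I write $p(t)=x(t)-x^{\ast}$, $h(t)=\tfrac12\|p(t)\|^{2}$, and $\gamma(t)=c(1+t)^{-\alpha}$, and I freely use Theorem \ref{Th1}: the solution converges weakly, hence $p$ is bounded, say $\|p(t)\|\le M$, and $W(t)=O(t^{-2\alpha})$ so that $\|\dot x(t)\|=O(t^{-\alpha})$. First I would record two elementary identities. Differentiating \eqref{energy} and using \eqref{aa} gives
\[ \dot W(t)=-\gamma(t)\|\dot x(t)\|^{2}+\langle \dot x(t),g(t)\rangle. \]
Next, from $\nabla\Phi(x)=g-\ddot x-\gamma\dot x$ and the convexity inequality $\Phi(x)-\Phi^{\ast}\le\langle\nabla\Phi(x),p\rangle$, one obtains the anchoring bound $\ddot h+\gamma\dot h\le\|\dot x\|^{2}-(\Phi(x)-\Phi^{\ast})+\langle g,p\rangle$.

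The heart of the argument is the Lyapunov function
\[ \mathcal{E}(t)=(1+t)^{2\nu}W(t)+2\nu(1+t)^{2\nu-1}\langle\dot x,p\rangle+\nu c\,(1+t)^{2\nu-1-\alpha}\|p\|^{2}. \]
Writing $b(t)=2\nu(1+t)^{2\nu-1}$, so that $b=\tfrac{d}{dt}(1+t)^{2\nu}$, I would differentiate $\mathcal{E}$ and substitute \eqref{aa}. Two deliberate cancellations occur: the term $b\,(\Phi(x)-\Phi^{\ast})$ produced by the convexity estimate annihilates the unwanted $b\,(\Phi(x)-\Phi^{\ast})$ coming from $\tfrac{d}{dt}[(1+t)^{2\nu}W]$, and the last summand of $\mathcal{E}$ is tailored so that $\tfrac{d}{dt}[b\gamma h]$ cancels the cross term $-b\gamma\langle\dot x,p\rangle$. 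What survives is
\[ \dot{\mathcal{E}}\le -c(1+t)^{2\nu-\alpha}\|\dot x\|^{2}+\tfrac{3}{2}b\|\dot x\|^{2}+\dot b\,\langle\dot x,p\rangle+(b\gamma)'h+(1+t)^{2\nu}\langle\dot x,g\rangle+b\langle g,p\rangle. \]
Because $2\nu-1<2\nu-\alpha$, for $t$ beyond some $T_{0}$ the term $\tfrac32 b\|\dot x\|^{2}=3\nu(1+t)^{2\nu-1}\|\dot x\|^{2}$ is dominated by $\tfrac{c}{4}(1+t)^{2\nu-\alpha}\|\dot x\|^{2}$; the indefinite term $\dot b\langle\dot x,p\rangle$ is split by Young's inequality into a piece absorbed by the damping and an integrable remainder $O((1+t)^{2\nu-4+\alpha})$; and $(b\gamma)'h=O((1+t)^{2\nu-2-\alpha})$ is integrable since $2\nu-2-\alpha<-1$, the coefficient $(b\gamma)'$ vanishing identically at the endpoint $\nu=\tfrac{1+\alpha}{2}$. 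The term $b\langle g,p\rangle$ is controlled by $2\nu M\int(1+t)^{2\nu-1}\|g\|\le 2\nu M\int(1+t)^{\nu}\|g\|<\infty$.

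The only genuinely delicate term is $(1+t)^{2\nu}\langle\dot x,g\rangle$, whose weight $(1+t)^{2\nu}$ exceeds what the hypothesis $\int(1+t)^{\nu}\|g\|\,dt<\infty$ can directly absorb. Here I would exploit the self-referential structure: since the anchoring summands of $\mathcal{E}$ are $O(1)$ by Theorem \ref{Th1}, one has $(1+t)^{2\nu}W\le\mathcal{E}+C_{1}$, hence $\|\dot x\|\le\sqrt{2W}\le\sqrt{2}\,(1+t)^{-\nu}\sqrt{\mathcal{E}+C_{1}}$, so that $(1+t)^{2\nu}\|\dot x\|\,\|g\|\le\sqrt{2}\,(1+t)^{\nu}\sqrt{S(T)+C_{1}}\,\|g\|$ with $S(T)=\sup_{[T_{0},T]}\mathcal{E}$. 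Integrating $\dot{\mathcal{E}}$ on $[T_{0},T]$ and taking the supremum then yields an inequality of the form $S(T)\le A+B\sqrt{S(T)+C_{1}}$, which forces $S(T)$ to be bounded uniformly in $T$. This gives $\mathcal{E}$ bounded, hence $W(t)=O(t^{-2\nu})$, and integrating the surviving negative term delivers \eqref{eq00}. The quadratic Gronwall closing is what makes the optimal exponent $\nu$ of the source condition match the weight $(1+t)^{2\nu}$, and I expect this step, together with verifying the endpoint $\nu=\tfrac{1+\alpha}{2}$, to be the main obstacle.

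Finally, to upgrade $O$ to $o$ in \eqref{eq0}, I would integrate the clean identity
\[ (1+T)^{2\nu}W(T)-W(0)=2\nu\!\int_{0}^{T}(1+t)^{2\nu-1}W\,dt-c\!\int_{0}^{T}(1+t)^{2\nu-\alpha}\|\dot x\|^{2}\,dt+\!\int_{0}^{T}(1+t)^{2\nu}\langle\dot x,g\rangle\,dt. \]
Now the refined bound $\|\dot x\|=O(t^{-\nu})$ gives $|(1+t)^{2\nu}\langle\dot x,g\rangle|\le C(1+t)^{\nu}\|g\|\in L^{1}$, and by \eqref{eq00} the damping integral converges; since $(1+T)^{2\nu}W(T)$ is bounded, rearranging shows $\int_{0}^{\infty}(1+t)^{2\nu-1}W\,dt<\infty$. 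Consequently $\tfrac{d}{dt}[(1+t)^{2\nu}W]$ lies in $L^{1}(0,\infty)$, so $(1+t)^{2\nu}W(t)$ converges to some limit $\ell\ge0$; were $\ell>0$, the integrand $(1+t)^{2\nu-1}W\sim\ell(1+t)^{-1}$ would not be integrable, a contradiction. Hence $\ell=0$, i.e. $W(t)=o(t^{-2\nu})$, which is \eqref{eq0}. This last step uses no anchoring terms and therefore works uniformly up to and including the endpoint.
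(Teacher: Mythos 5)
Your argument is correct in substance --- the cancellations in your anchored energy $\mathcal{E}$, the absorption of $\tfrac{3}{2}b\|\dot x\|^{2}$ and of the Young remainder $O((1+t)^{2\nu-4+\alpha})$ by the damping, the sign/vanishing of $(b\gamma)'$ at the endpoint $\nu=\tfrac{1+\alpha}{2}$, and the sup-Gronwall closure $S(T)\le A+B\sqrt{S(T)+C_{1}}$ all check out --- but it takes a much heavier route than the paper's, and the difference is localized at one structural point. The paper never introduces an anchored Lyapunov function: it works with the scalar $\rho(t)=(1+t)^{2\nu}W(t)$ alone, using only the energy inequality (\ref{Kb}), which gives $\dot\rho(t)\le 2\nu(1+t)^{2\nu-1}W(t)+\sqrt{2}\,(1+t)^{\nu}\|g(t)\|\sqrt{\rho(t)}$. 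The quantity you spend most of your effort manufacturing --- integrability of $(1+t)^{2\nu-1}W$ --- is imported for free from conclusion (\ref{c2}) of Theorem \ref{Th1}: since $\nu\le\tfrac{1+\alpha}{2}$ one has $2\nu-1\le\alpha$, hence $\int_{0}^{\infty}(1+t)^{2\nu-1}W\,dt\le\int_{0}^{\infty}(1+t)^{\alpha}W\,dt<\infty$. Gronwall--Bellman then bounds $\rho$, the bound makes $[\dot\rho]^{+}$ integrable so $\rho$ converges, and integrability of $(1+t)^{-1}\rho$ forces the limit to be $0$; finally (\ref{eq00}) follows by multiplying (\ref{Ka}) by $(1+t)^{2\nu}$ and integrating by parts. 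So your anchored energy and quadratic Gronwall essentially redo, at weight $2\nu$, the work Theorem \ref{Th1} already did at weight $\alpha$; what your route buys is independence from (\ref{c2}) (you only use boundedness of the trajectory and (\ref{c1})), and your integrated-identity upgrade to $o$ is a legitimate alternative to the paper's $[\dot\rho]^{+}\in L^{1}$ argument --- but since $\nu\ge\alpha$ makes Theorem \ref{Th1} fully available, this independence costs much and gains nothing here.

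There is one genuine, if small, gap. Your passage from $O$ to $o$ obtains $\int_{0}^{\infty}(1+t)^{2\nu-1}W\,dt<\infty$ by rearranging the integrated identity, and the term you solve for carries the prefactor $2\nu$. The theorem permits $\nu=\alpha=0$, and at that corner your rearrangement degenerates to a triviality: you get $W=O(1)$ and (\ref{eq00}), but no integrability statement on $W$, so the concluding contradiction (``$\ell(1+t)^{-1}$ would not be integrable'') has nothing to bite on, and $W(t)\to 0$ is not established. The paper's proof covers this corner precisely because its integrability input comes from (\ref{c2}) rather than from a rearrangement. The repair is one line --- for $\nu=0$, $[\dot W]^{+}\le\|g\|\sqrt{2W}\in L^{1}$ shows $W$ converges, and (\ref{c2}) with $\alpha=0$ gives $\int_{0}^{\infty}W\,dt<\infty$, forcing the limit to be $0$ --- but as written your proof does not cover that endpoint.
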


The next result shows that, as in the limit case $\alpha=1$ (see \cite[Theorem 3.1]{3}), the strong convergence of $x(t)$ as $t\rightarrow+\infty$ holds if
the interior of $\arg\min\Phi$ is not empty

\begin{theorem}
\label{Th3}Suppose that $\int_{0}^{+\infty}\left(  t+1\right)  ^{\alpha
}\left\Vert g\left(  t\right)  \right\Vert dt<\infty$ and $int\left(  \arg
\min\Phi\right)  \neq\varnothing.$ Then there exists some $x^{\ast}\in\arg
\min\Phi$ such that $x\left(  t\right)  \rightarrow x^{\ast}$ strongly in
$\mathcal{H}$ as $t\rightarrow+\infty.$
\end{theorem}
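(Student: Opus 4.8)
The plan is to upgrade the weak convergence furnished by Theorem \ref{Th1} to strong convergence by using the interior hypothesis to gain \emph{weighted} integrability of $\nabla\Phi(x(t))$, and then to deduce that $\dot x\in L^{1}(0,+\infty;\mathcal H)$. Throughout I would fix $z$ and $\rho>0$ with $\bar{B}(z,\rho)\subset\arg\min\Phi$, and record the consequences of Theorem \ref{Th1}: the trajectory is bounded (a weakly convergent trajectory is bounded by the uniform boundedness principle), so $M:=\sup_{t}\Vert x(t)-z\Vert<\infty$; from (c1) one has $\Vert\dot x(t)\Vert=O(t^{-\alpha})$, whence $(1+t)^{\alpha}\Vert\dot x(t)\Vert=O(1)$; and from (c2) one has $\int_{0}^{+\infty}(1+t)^{\alpha}\Vert\dot x(t)\Vert^{2}\,dt<\infty$.

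First I would derive the pointwise inequality $\rho\,\Vert\nabla\Phi(x(t))\Vert\le\langle\nabla\Phi(x(t)),x(t)-z\rangle$. This follows from the monotonicity of $\nabla\Phi$ together with $\nabla\Phi(z+\rho v)=0$ for every $\Vert v\Vert\le1$ (each such point is a minimizer of the convex $C^{1}$ function $\Phi$): indeed $\langle\nabla\Phi(x(t)),x(t)-z-\rho v\rangle\ge0$, and taking the supremum over $\Vert v\Vert\le1$ yields the claim.

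The heart of the argument is to show $\int_{0}^{+\infty}(1+t)^{\alpha}\Vert\nabla\Phi(x(t))\Vert\,dt<\infty$. Substituting $\nabla\Phi(x)=g-\ddot x-\gamma\dot x$, multiplying by the weight $w(t)=(1+t)^{\alpha}$ and integrating on $[0,T]$, the key algebraic simplification is that $w(t)\gamma(t)\equiv c$ is constant, so the damping term contributes only $-c\,[\tfrac12\Vert x-z\Vert^{2}]_{0}^{T}$, which is bounded. Rewriting $\langle\ddot x,x-z\rangle=\tfrac{d}{dt}\langle\dot x,x-z\rangle-\Vert\dot x\Vert^{2}$ and integrating by parts, the right-hand side reduces to the boundary term $-w(T)\langle\dot x(T),x(T)-z\rangle$ (bounded by $(1+T)^{\alpha}\Vert\dot x(T)\Vert\,M=O(1)$), the term $\int_{0}^{T}w\Vert\dot x\Vert^{2}\,dt$ (finite by (c2)), the source term $\int_{0}^{T}w\langle g,x-z\rangle\,dt$ (finite since $\int(1+t)^{\alpha}\Vert g\Vert<\infty$), and the cross term $\int_{0}^{T}\dot w\langle\dot x,x-z\rangle\,dt$. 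This last term is the only subtle one: the crude bound $(1+t)^{\alpha-1}\Vert\dot x\Vert=O(t^{-1})$ is not integrable, but splitting $(1+t)^{\alpha-1}=(1+t)^{\alpha/2-1}(1+t)^{\alpha/2}$ and applying Cauchy--Schwarz controls it by $(\int(1+t)^{\alpha-2}\,dt)^{1/2}(\int(1+t)^{\alpha}\Vert\dot x\Vert^{2}\,dt)^{1/2}<\infty$, using $\alpha-2<-1$ and (c2). Letting $T\to+\infty$ gives the weighted integrability.

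Finally, writing $f:=g-\nabla\Phi(x)$, the previous steps give $\int_{0}^{+\infty}(1+t)^{\alpha}\Vert f(t)\Vert\,dt<\infty$, and $p:=\dot x$ solves $\dot p+\gamma p=f$. Solving via the integrating factor $\mu(t)=\exp(\int_{0}^{t}\gamma(s)\,ds)$ and applying Tonelli's theorem, I would bound $\int_{0}^{+\infty}\Vert\dot x\Vert\,dt$ by $\Vert x_{1}\Vert\int_{0}^{\infty}\mu(t)^{-1}\,dt+\int_{0}^{\infty}I(s)\Vert f(s)\Vert\,ds$, where $I(s):=\mu(s)\int_{s}^{\infty}\mu(t)^{-1}\,dt$. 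An integration by parts using $\mu'=\gamma\mu$ gives $I(s)\le\gamma(s)^{-1}+\alpha c^{-1}(1+s)^{\alpha-1}I(s)$, whence $I(s)\le C(1+s)^{\alpha}$; together with $\int_{0}^{\infty}\mu(t)^{-1}\,dt<\infty$ (stretched-exponential decay since $\alpha<1$) this yields $\int_{0}^{+\infty}\Vert\dot x\Vert\,dt<\infty$. By the Cauchy criterion $x(t)$ then converges strongly, and its strong limit necessarily equals the weak limit $x^{\ast}\in\arg\min\Phi$ supplied by Theorem \ref{Th1}. The main obstacle is the weighted estimate of the preceding paragraph: the weight $(1+t)^{\alpha}$ must be chosen precisely so that the damping term trivializes while the cross term remains integrable, and it is exactly this same weight $(1+t)^{\alpha}$ that is needed to absorb the growing kernel $I(s)\sim(1+s)^{\alpha}$ in the final step.
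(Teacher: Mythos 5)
Your proposal is correct, and its overall skeleton is the same as the paper's: both derive the pointwise inequality $\rho\left\Vert \nabla\Phi(x(t))\right\Vert \leq\left\langle \nabla\Phi(x(t)),x(t)-z\right\rangle $ from the interior hypothesis via monotonicity; both establish the weighted integrability $\int_{0}^{+\infty}(1+t)^{\alpha}\left\Vert \nabla\Phi(x(t))\right\Vert dt<\infty$ (the paper states this as (\ref{h1}), with $1/\gamma(t)$ playing the role of $(1+t)^{\alpha}$); and both conclude by rewriting the equation as $\ddot{x}+\gamma\dot{x}=\omega$ with $\omega$ integrable against this weight and showing $\dot{x}\in L^{1}(0,+\infty;\mathcal{H})$ by the integrating factor and Fubini--Tonelli (your kernel $I(s)$ is exactly the paper's function $h$ of (\ref{q}), and your bound $I(s)\leq C(1+s)^{\alpha}$ is (\ref{q2}), re-proved by the same integration-by-parts trick; this is the paper's Lemma \ref{lem2}). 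Where you genuinely differ is in the derivation of the central weighted estimate. The paper recycles the Lyapunov function $\mathcal{E}$ from the proof of Theorem \ref{Th1}: taking $x^{\ast}=z_{0}$ in the identity (\ref{b1}) and inserting the pointwise inequality gives $\mathcal{E}^{\prime}(t)+2rh(t)\left\Vert \nabla\Phi(x(t))\right\Vert \leq4\dot{h}(t)h(t)\left(  \Phi(x(t))-\Phi^{\ast}\right)  $, which integrates to (\ref{h1}) using the boundedness (\ref{a}) of $\mathcal{E}$, the integrability (\ref{a1}), and the boundedness of $\dot{h}$. You instead multiply the equation by $(1+t)^{\alpha}\left(  x(t)-z\right)  $ and integrate by parts directly, exploiting that $(1+t)^{\alpha}\gamma(t)\equiv c$, controlling the boundary term via (\ref{c1}), the $\left\Vert \dot{x}\right\Vert ^{2}$ term via (\ref{c2}), and the cross term via the Cauchy--Schwarz splitting $(1+t)^{\alpha-1}=(1+t)^{\alpha/2-1}(1+t)^{\alpha/2}$ (correctly identified as the only delicate point, where the crude $O(1/t)$ bound fails). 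The trade-off: the paper's route is shorter because the machinery ($h$, $\mathcal{E}$, (\ref{a}), (\ref{a1})) is already on the table, whereas your argument is self-contained given only the \emph{statements} (\ref{c1})--(\ref{c2}) of Theorem \ref{Th1} together with boundedness of the trajectory, so it does not depend on the internals of that proof. Two minor points you should make explicit: the deduction $I(s)\leq C(1+s)^{\alpha}$ from $I(s)\leq\gamma(s)^{-1}+\alpha c^{-1}(1+s)^{\alpha-1}I(s)$ needs $(1+s)^{\alpha-1}$ small, i.e. $s$ large, with continuity of $I$ covering compact sets; and passing $T\rightarrow+\infty$ in your weighted identity uses that the integrand $\left\langle \nabla\Phi(x),x-z\right\rangle $ is nonnegative, so monotone convergence applies.
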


In the last theorem, we prove, under an assumption on the source term $g$
slightly stronger than the optimal condition (\ref{opt}), the strong
convergence of the solution $x(t)$ when the potential function $\Phi$ is even

\begin{theorem}
\label{Th4}Suppose that $\int_{0}^{+\infty}\left(  t+1\right)  ^{\frac
{\alpha+1}{2}}\left\Vert g\left(  t\right)  \right\Vert dt<\infty$ and $\Phi$
is even (i,e. $\Phi(-x)=\Phi(x),~\forall x\in\mathcal{H}$). Then $x\left(
t\right)  $ converges strongly in $\mathcal{H}$ as $t\rightarrow+\infty$ to
some $x^{\ast}\in\arg\min\Phi.$
\end{theorem}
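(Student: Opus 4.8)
The plan is to exploit the symmetry forced by the evenness of $\Phi$, reduce the strong convergence to an equality of norms, and then close with a weighted energy estimate resting on Theorem \ref{Th2}. First I would record the consequences of the hypothesis. Since $\frac{\alpha+1}{2}\in[\alpha,\frac{1+\alpha}{2}]$, both Theorem \ref{Th1} and Theorem \ref{Th2} (with $\nu=\frac{\alpha+1}{2}$) apply: the trajectory $x$ is bounded, $x(t)\rightharpoonup x^{\ast}$ weakly for some $x^{\ast}\in\arg\min\Phi$, $W(t)\to0$ so that $\dot{x}(t)\to0$ strongly, and crucially
\begin{equation*}
\int_{0}^{+\infty}(1+t)\left\Vert \dot{x}(t)\right\Vert^{2}dt<\infty,\qquad W(t)=o\!\left(t^{-(1+\alpha)}\right).
\end{equation*}
In particular $\dot{x}\in L^{2}$, and since $\Phi(x(t))-\Phi^{\ast}\le W(t)$ with $\int(1+t)^{\alpha}W<\infty$ we also get $\int_{0}^{+\infty}(\Phi(x(t))-\Phi^{\ast})dt<\infty$. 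Finally, evenness gives $0\in\arg\min\Phi$ (because $\Phi(0)\le\frac12\Phi(x)+\frac12\Phi(-x)=\Phi(x)$) and makes $\arg\min\Phi$ symmetric; hence $\Phi^{\ast}=\Phi(0)$ and $-x^{\ast}\in\arg\min\Phi$.

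Next, for an arbitrary $z\in\arg\min\Phi$ I would study $h_{z}(t)=\frac12\left\Vert x(t)-z\right\Vert^{2}$. A direct computation using \eqref{aa} together with the convexity inequality $\langle\nabla\Phi(x),x-z\rangle\ge\Phi(x)-\Phi^{\ast}\ge0$ yields
\begin{equation*}
\ddot{h}_{z}(t)+\gamma(t)\dot{h}_{z}(t)\le\left\Vert\dot{x}(t)\right\Vert^{2}+\left\Vert g(t)\right\Vert\big(\left\Vert x(t)\right\Vert+\left\Vert z\right\Vert\big)=:k_{z}(t),
\end{equation*}
and $k_{z}\in L^{1}(0,+\infty)$ since $x$ is bounded, $\dot{x}\in L^{2}$ and $g\in L^{1}$. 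A classical lemma on the inequality $\ddot{h}+\gamma\dot{h}\le k$ with $h$ bounded below, $k\in L^{1}$ and $\int^{+\infty}\gamma=+\infty$ (the latter holding because $\alpha<1$) then guarantees that $\lim_{t\to+\infty}\left\Vert x(t)-z\right\Vert$ exists for every $z\in\arg\min\Phi$.

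I would then convert this into strong convergence via the symmetry. Applying the previous step with $z=0$ shows that $\ell:=\lim_{t}\left\Vert x(t)\right\Vert$ exists; with $z=x^{\ast}$ it shows $\lim_{t}\left\Vert x(t)-x^{\ast}\right\Vert$ exists as well. Since $x(t)\rightharpoonup x^{\ast}$ gives $\langle x(t),x^{\ast}\rangle\to\left\Vert x^{\ast}\right\Vert^{2}$, the identity $\left\Vert x(t)-x^{\ast}\right\Vert^{2}=\left\Vert x(t)\right\Vert^{2}-2\langle x(t),x^{\ast}\rangle+\left\Vert x^{\ast}\right\Vert^{2}$ forces $\lim_{t}\left\Vert x(t)-x^{\ast}\right\Vert^{2}=\ell^{2}-\left\Vert x^{\ast}\right\Vert^{2}$. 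As weak lower semicontinuity already yields $\ell\ge\left\Vert x^{\ast}\right\Vert$, the entire theorem reduces to the single inequality $\ell\le\left\Vert x^{\ast}\right\Vert$, i.e. $\left\Vert x(t)\right\Vert\to\left\Vert x^{\ast}\right\Vert$, which by the Radon--Riesz property of Hilbert spaces upgrades weak to strong convergence.

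\textbf{The main obstacle} is precisely establishing $\limsup_{t}\left\Vert x(t)\right\Vert\le\left\Vert x^{\ast}\right\Vert$: the soft arguments above show that $\left\Vert x(t)\right\Vert$ converges but do not identify the value of the limit, and in infinite dimensions $\ell>\left\Vert x^{\ast}\right\Vert$ is not a priori excluded. To pin the limit down I would return to $h_{0}(t)=\frac12\left\Vert x(t)\right\Vert^{2}$, retain the full term $-\langle\nabla\Phi(x),x\rangle\le-(\Phi(x)-\Phi^{\ast})$ in the expression for $\ddot{h}_{0}+\gamma\dot{h}_{0}$, and integrate against the damping after multiplication by a suitable power of $(1+t)$. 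The boundary and weighted terms that arise are controlled exactly by the strengthened hypothesis $\int_{0}^{+\infty}(1+t)^{\frac{\alpha+1}{2}}\left\Vert g\right\Vert dt<\infty$, through the sharp bounds $\int(1+t)\left\Vert\dot{x}\right\Vert^{2}<\infty$ and $W(t)=o(t^{-(1+\alpha)})$ supplied by Theorem \ref{Th2}; this is the step that would fail under the merely optimal condition \eqref{opt}, and it mirrors the computation of \cite[Theorem 3.1]{3} in the limiting case $\alpha=1$. Carrying out this weighted estimate, and thereby proving $\ell=\left\Vert x^{\ast}\right\Vert$, is the technical heart of the proof.
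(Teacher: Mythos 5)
Your soft reductions are all correct: Theorems \ref{Th1} and \ref{Th2} do apply with $\nu=\frac{1+\alpha}{2}$, evenness does give $0\in\arg\min\Phi$, the limits $\lim_{t}\left\Vert x(t)\right\Vert$ and $\lim_{t}\left\Vert x(t)-x^{\ast}\right\Vert$ do exist, and strong convergence is indeed equivalent to the single inequality $\ell:=\lim_{t}\left\Vert x(t)\right\Vert\leq\left\Vert x^{\ast}\right\Vert$. But the step you yourself flag as ``the technical heart'' is exactly what is missing, and the route you sketch for it cannot be completed. In your whole argument, evenness enters only through the consequence $0\in\arg\min\Phi$; that property can be arranged for \emph{any} convex potential by translating a minimizer to the origin (replace $\Phi(\cdot)$ by $\Phi(\cdot+z_{0})$, which translates the trajectory accordingly), so a proof using evenness only in this way would upgrade Theorem \ref{Th1} to strong convergence for every convex $\Phi$ --- which is precisely what is not true in general, and is the reason assumptions like evenness or $int(\arg\min\Phi)\neq\varnothing$ appear in \cite{2} and \cite{3} in the first place. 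Concretely, the weighted differential inequalities you propose for $h_{0}(t)=\frac{1}{2}\left\Vert x(t)\right\Vert^{2}$ involve only the scalar quantities $\left\Vert x(t)\right\Vert$, $W(t)$, $\left\Vert\dot{x}(t)\right\Vert$, $\left\Vert g(t)\right\Vert$; the weak limit $x^{\ast}$ never appears in them, so no integration of these inequalities, however cleverly weighted, can distinguish $\ell=\left\Vert x^{\ast}\right\Vert$ from a strictly larger value.

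The paper closes this gap by using evenness in an essentially stronger way: it compares $x(t)$ with the \emph{reflected later trajectory point} $-x(T)$, not with a fixed minimizer. Fixing $T>0$, one sets
\begin{equation*}
y(t)=\left\Vert x(t)\right\Vert^{2}-\left\Vert x(T)\right\Vert^{2}-\tfrac{1}{2}\left\Vert x(t)-x(T)\right\Vert^{2}
\end{equation*}
and computes $\ddot{y}+\gamma\dot{y}$; the crucial point is the convexity inequality written at $-x(T)$, namely
$\langle\nabla\Phi(x(t)),-x(T)-x(t)\rangle\leq\Phi(-x(T))-\Phi(x(t))=\Phi(x(T))-\Phi(x(t))$,
where evenness converts the value at the reflected point back into a value along the trajectory. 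One then bounds $\Phi(x(T))-\Phi(x(t))\leq W(T)-W(t)+\frac{1}{2}\left\Vert\dot{x}(t)\right\Vert^{2}$ and, via (\ref{Kb}), $W(T)-W(t)\leq\sqrt{2}\int_{t}^{T}\left\Vert g(s)\right\Vert\sqrt{W(s)}\,ds$; here Theorem \ref{Th2} with $\nu=\frac{1+\alpha}{2}$ gives $\sqrt{W(s)}\leq C(1+s)^{-\frac{1+\alpha}{2}}$, so this term is bounded uniformly in $T$ by an integrable function of $t$ (this is where the strengthened hypothesis on $g$ is spent). Integrating the resulting inequality from $t$ to $T$ and using $y(T)=0$ yields
$\frac{1}{2}\left\Vert x(t)-x(T)\right\Vert^{2}\leq\left\Vert x(t)\right\Vert^{2}-\left\Vert x(T)\right\Vert^{2}+\varpi(t)$
with $\varpi(t)\rightarrow0$, and since $\left\Vert x(t)\right\Vert$ converges (Theorem \ref{Th1} with $x^{\ast}=0$, as in your argument), this is a Cauchy criterion: strong convergence follows directly, without ever needing to identify the limit of the norm with $\left\Vert x^{\ast}\right\Vert$ beforehand. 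Some version of this reflection idea (or an equivalent use of evenness beyond $0\in\arg\min\Phi$) is indispensable, and it is the ingredient your proposal lacks.
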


\section{Proof of Theorem \ref{Th1}}

The proof makes use of a modified version of the method used Attouch, Chbani,
Peypouquet and Redont in \cite{3}. It relies on the study of a suitable
Lyapunov function $\mathcal{E}$ and uses the following two classical lemmas.

\begin{lemma}
[Gronwall-Bellman lemma]Let $f\in L^{1}\left(  \left[  a,b\right]
,\mathbb{R}_{+}\right)  $ ant $c$ a nonnegative constant. Suppose that $w$ is
a continuous function from $\left[  a,b\right]  $ into $\mathbb{R}$ that
satisfies: for all $t\in\left[  a,b\right]  ,$%
\[
\frac{1}{2}w^{2}\left(  t\right)  \leq\frac{1}{2}c^{2}+\int_{a}^{t}f\left(
s\right)  w\left(  s\right)  ds.
\]
Then, for all $t\in\left[  a,b\right]  ,$%
\[
w\left(  t\right)  \leq c+\int_{a}^{t}f\left(  s\right)  ds.
\]

\end{lemma}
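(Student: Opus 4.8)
The plan is to reduce the quadratic integral inequality to a linear differential inequality via the standard majorant technique. First I would introduce the absolutely continuous majorant
\[
\phi(t)=\tfrac{1}{2}c^{2}+\int_{a}^{t}f(s)w(s)\,ds,
\]
so that the hypothesis reads $\tfrac{1}{2}w^{2}(t)\le\phi(t)$, which in particular forces $\phi(t)\ge 0$ on $[a,b]$. Since $f\in L^{1}$ and $w$ is continuous (hence bounded on $[a,b]$), the integrand $fw$ is integrable and $\phi$ is absolutely continuous with $\phi'(t)=f(t)w(t)$ for almost every $t$. The inequality $w(t)\le\lvert w(t)\rvert=\sqrt{w^{2}(t)}\le\sqrt{2\phi(t)}$ then transfers the bound from $w$ to $\sqrt{2\phi}$, and the nonnegativity of $f$ is what lets me keep the direction of the estimate under multiplication by $f$.

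The main obstacle is that $\phi$ may vanish, so that $\sqrt{2\phi}$ is not differentiable where it touches zero and the naive division by $\sqrt{2\phi}$ is illegitimate. To circumvent this I would regularize: fix $\varepsilon>0$ and set $\phi_{\varepsilon}=\phi+\tfrac{1}{2}\varepsilon^{2}\ge\tfrac{1}{2}\varepsilon^{2}>0$, which has the same derivative $\phi_{\varepsilon}'=fw$ almost everywhere. Writing $\psi_{\varepsilon}=\sqrt{2\phi_{\varepsilon}}$, the function $\psi_{\varepsilon}$ is absolutely continuous because $\phi_{\varepsilon}$ is absolutely continuous and bounded away from $0$, so that the square root acts as a Lipschitz map along it. Differentiating gives, for almost every $t$,
\[
\psi_{\varepsilon}'(t)=\frac{\phi_{\varepsilon}'(t)}{\sqrt{2\phi_{\varepsilon}(t)}}=\frac{f(t)w(t)}{\psi_{\varepsilon}(t)}\le f(t),
\]
where the last step uses $w(t)\le\sqrt{2\phi(t)}\le\psi_{\varepsilon}(t)$ together with $f(t)\ge0$.

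Integrating this linear differential inequality from $a$ to $t$ and recalling $\psi_{\varepsilon}(a)=\sqrt{c^{2}+\varepsilon^{2}}$ yields
\[
w(t)\le\psi_{\varepsilon}(t)\le\sqrt{c^{2}+\varepsilon^{2}}+\int_{a}^{t}f(s)\,ds .
\]
Finally I would let $\varepsilon\to0^{+}$; since the right-hand side converges to $c+\int_{a}^{t}f(s)\,ds$ for each fixed $t$, this gives the claimed estimate on all of $[a,b]$. The only points requiring care are the almost-everywhere differentiation of $\phi$ and of the composition $\psi_{\varepsilon}$, both of which are justified by absolute continuity and the strict positivity produced by the regularization.
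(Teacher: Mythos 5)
Your proof is correct. The paper does not actually write out a proof of this lemma (it only remarks that it is easy and parallels the classical Gronwall argument), and your argument---introducing the integral majorant $\phi$, regularizing by $\varepsilon$ so that it stays strictly positive, differentiating $\psi_{\varepsilon}=\sqrt{2\phi_{\varepsilon}}$ to get $\psi_{\varepsilon}'\le f$ a.e., integrating, and letting $\varepsilon\to 0^{+}$---is exactly that standard argument, carried out with the right care at the points where $\phi$ could vanish.
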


The proof of this lemma is easy and similar to the proof of the classical
Gronwall's lemma.
\begin{lemma}
[Opial's lemma \cite{9}]Let $x:[0,+\infty\lbrack\rightarrow\mathcal{H}.$
Assume that there exists a nonempty subset $S$ of $\mathcal{H}$ such that:

\begin{enumerate}
\item[i)] If $t_{n}\rightarrow+\infty$ and $x(t_{n})\rightharpoonup x$ weakly
in $\mathcal{H}$ , then $x\in S.$

\item[ii)] For every $z\in S,$ $\lim_{t\rightarrow+\infty}\left\Vert
x(t)-z\right\Vert $ exists.
\end{enumerate}
\end{lemma}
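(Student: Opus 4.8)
The plan is to show that assumptions (i) and (ii) together force the trajectory $x(t)$ to converge weakly, as $t\rightarrow+\infty$, to a single element of $S$. First I would use (ii) to obtain boundedness: fixing one $z\in S$, the real function $t\mapsto\Vert x(t)-z\Vert$ has a finite limit and is therefore bounded, so $x(t)$ remains in a bounded subset of $\mathcal{H}$. Since closed balls of a Hilbert space are weakly sequentially compact, every sequence $t_{n}\rightarrow+\infty$ admits a subsequence along which $x(t_{n})$ converges weakly, and by (i) the corresponding weak limit lies in $S$. Hence the set of weak sequential cluster points of $x(t)$ is nonempty and contained in $S$.

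The crux of the argument is to show that this cluster point is \emph{unique}. Let $x_{1}$ and $x_{2}$ be two weak sequential cluster points, realized along $t_{n}\rightarrow+\infty$ and $s_{n}\rightarrow+\infty$ respectively; both lie in $S$ by (i). I would exploit (ii) through the elementary identity
\[
\Vert x(t)-x_{1}\Vert^{2}-\Vert x(t)-x_{2}\Vert^{2}=2\left\langle x(t),x_{2}-x_{1}\right\rangle +\Vert x_{1}\Vert^{2}-\Vert x_{2}\Vert^{2}.
\]
By (ii) each of the two squared norms on the left converges as $t\rightarrow+\infty$, so the scalar quantity $\left\langle x(t),x_{2}-x_{1}\right\rangle$ converges to some limit $\ell$. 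Passing to the limit along $t_{n}$ and along $s_{n}$ and using weak convergence gives $\ell=\left\langle x_{1},x_{2}-x_{1}\right\rangle$ and $\ell=\left\langle x_{2},x_{2}-x_{1}\right\rangle$; subtracting these yields $\Vert x_{2}-x_{1}\Vert^{2}=0$, whence $x_{1}=x_{2}$. This identity-based uniqueness is the only genuinely nontrivial step, the rest being a boundedness observation and a routine compactness argument.

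It remains to deduce honest weak convergence of $x(t)$ from boundedness together with the uniqueness of the weak cluster point $x^{\ast}\in S$. I would argue by contradiction: if $x(t)$ did not converge weakly to $x^{\ast}$, there would exist $h\in\mathcal{H}$, $\varepsilon>0$ and a sequence $\tau_{n}\rightarrow+\infty$ with $\left\vert \left\langle x(\tau_{n})-x^{\ast},h\right\rangle \right\vert \geq\varepsilon$; by boundedness the sequence $x(\tau_{n})$ would admit a weakly convergent subsequence whose limit, belonging to $S$ by (i), would be a weak cluster point distinct from $x^{\ast}$, contradicting the uniqueness just proved. Therefore $\left\langle x(t),h\right\rangle \rightarrow\left\langle x^{\ast},h\right\rangle$ for every $h\in\mathcal{H}$, i.e. $x(t)\rightharpoonup x^{\ast}\in S$, which is the desired conclusion.
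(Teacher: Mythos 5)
Your proof is correct and complete: (ii) gives boundedness of the trajectory (for large $t$, which suffices), weak sequential compactness of bounded sets in $\mathcal{H}$ plus (i) gives a nonempty set of weak cluster points inside $S$, the identity $\left\Vert x(t)-x_{1}\right\Vert ^{2}-\left\Vert x(t)-x_{2}\right\Vert ^{2}=2\left\langle x(t),x_{2}-x_{1}\right\rangle +\left\Vert x_{1}\right\Vert ^{2}-\left\Vert x_{2}\right\Vert ^{2}$ combined with (i) and (ii) forces uniqueness of that cluster point, and the final subsequence argument upgrades this to weak convergence. The paper does not actually prove this lemma (it calls the proof easy and refers to \cite{4}); your argument is precisely the standard proof given in that reference, so you have simply supplied, correctly, what the paper omits.
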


The proof of Opial's lemma is easy, see for instance \cite{4}.

Let us now start the proof our theorem. We first define on $[0,+\infty\lbrack$
the function
\begin{equation}
h\left(  t\right)  =e^{\Gamma\left(  t\right)  }\int_{t}^{+\infty}%
e^{-\Gamma\left(  s\right)  }ds, \label{q}%
\end{equation}
where $\Gamma\left(  t\right)  =\int_{0}^{t}\gamma\left(  s\right)  ds.$ A
simple calculation yields that $h$ satisfies the differential equation
\begin{equation}
h^{\prime}\left(  t\right)  -\gamma\left(  t\right)  h\left(  t\right)  +1=0.
\label{q1}%
\end{equation}
Moreover, since
\[
\left(  \frac{-1}{\gamma\left(  s\right)  }e^{-\Gamma\left(  s\right)
}\right)  ^{\prime}=\left(  1+\frac{\gamma^{\prime}(s)}{\gamma^{2}(s)}\right)
e^{-\Gamma\left(  s\right)  }\underset{+\infty}{\backsim}e^{-\Gamma\left(
s\right)  }%
\]
then
\begin{equation}
h(t)\underset{+\infty}{\backsim}\frac{1}{\gamma(t)}. \label{q2}%
\end{equation}
Let $x^{\ast}\in\arg\min\Phi$ and define the function%
\begin{align}
\mathcal{E}\left(  t\right)   &  =2\left(  h\left(  t\right)  \right)
^{2}\left(  \Phi\left(  x\left(  t\right)  \right)  -\Phi^{\ast}\right)
+\left\Vert x\left(  t\right)  -x^{\ast}+h\left(  t\right)  \dot{x}\left(
t\right)  \right\Vert ^{2}\nonumber\\
&  -2\int_{0}^{t}h\left(  s\right)  \left\langle g\left(  s\right)  ,x\left(
s\right)  -x^{\ast}+h\left(  s\right)  \dot{x}\left(  s\right)  \right\rangle
ds. \label{b}%
\end{align}
By differentiating, we obtain%
\begin{align*}
\mathcal{E}^{\prime}\left(  t\right)   &  =4h^{\prime}\left(  t\right)
h\left(  t\right)  \left(  \Phi\left(  x\left(  t\right)  \right)  -\Phi
^{\ast}\right)  +2\left(  h\left(  t\right)  \right)  ^{2}\left\langle
\nabla\Phi\left(  x\left(  t\right)  \right)  ,\dot{x}\left(  t\right)
\right\rangle \\
&  +2\left\langle \left(  1+h^{\prime}\left(  t\right)  \right)  \dot
{x}\left(  t\right)  +h\left(  t\right)  \ddot{x}\left(  t\right)  ,x\left(
t\right)  -x^{\ast}+h\left(  t\right)  \dot{x}\left(  t\right)  \right\rangle
\\
&  -2h\left(  t\right)  \left\langle g\left(  t\right)  ,x\left(  t\right)
-x^{\ast}+h\left(  t\right)  \dot{x}\left(  t\right)  \right\rangle .
\end{align*}
Hence by sing (\ref{aa}), we get%
\begin{equation}
\mathcal{E}^{\prime}\left(  t\right)  =4h^{\prime}\left(  t\right)  h\left(
t\right)  \left(  \Phi\left(  x\left(  t\right)  \right)  -\Phi^{\ast}\right)
-2h\left(  t\right)  \left\langle \nabla\Phi\left(  x\left(  t\right)
\right)  ,x\left(  t\right)  -x^{\ast}\right\rangle . \label{b1}%
\end{equation}
Since the function $\Phi$ is convex, we have
\[
\Phi^{\ast}=\Phi\left(  x^{\ast}\right)  \geq\Phi\left(  x\left(  t\right)
\right)  +\left\langle \nabla\Phi\left(  x\left(  t\right)  \right)  ,x^{\ast
}-x\left(  t\right)  \right\rangle .
\]
Inserting this inequality in (\ref{b1}) yields
\[
\mathcal{E}^{\prime}\left(  t\right)  \leq2\left[  2h^{\prime}\left(
t\right)  -1\right]  h\left(  t\right)  \left(  \Phi\left(  x\left(  t\right)
\right)  -\Phi^{\ast}\right)  .
\]
\ From (\ref{q1}) and (\ref{q2}), $2h^{\prime}\left(  t\right)  -1\rightarrow
-1$ as $t\rightarrow+\infty.$ Then there exists $t_{1}\geq0$ such that
\begin{equation}
\mathcal{E}^{\prime}\left(  t\right)  +h\left(  t\right)  \left(  \Phi\left(
x\left(  t\right)  \right)  -\Phi^{\ast}\right)  \leq0,~\forall t\geq t_{1}.
\label{b2}%
\end{equation}
Therefore $\mathcal{E}$ is a decreasing function on $[t_{1},+\infty\lbrack$.
Then for every $t\geq t_{1},$ $\mathcal{E}(t)\leq\mathcal{E}(t_{1}),$ which
implies that
\begin{align}
&  2\left(  h\left(  t\right)  \right)  ^{2}\left(  \Phi\left(  x\left(
t\right)  \right)  -\Phi^{\ast}\right)  +\left\Vert x\left(  t\right)
-x^{\ast}+h\left(  t\right)  \dot{x}\left(  t\right)  \right\Vert
^{2}\nonumber\\
&  \leq C^{2}+2\int_{t_{1}}^{t}h\left(  s\right)  \left\langle g\left(
s\right)  ,x\left(  s\right)  -x^{\ast}+h\left(  s\right)  \dot{x}\left(
s\right)  \right\rangle ds, \label{b3}%
\end{align}
where
\[
C^{2}=2\left(  h\left(  t_{1}\right)  \right)  ^{2}\left(  \Phi\left(
x\left(  t_{1}\right)  \right)  -\Phi^{\ast}\right)  +\left\Vert x\left(
t_{1}\right)  -x^{\ast}+h\left(  t_{1}\right)  \dot{x}\left(  t_{1}\right)
\right\Vert ^{2}.
\]
Using now the Cauchy-Schwarz inequality, we obtain%
\[
\frac{1}{2}\left\Vert x\left(  t\right)  -x^{\ast}+h\left(  t\right)  \dot
{x}\left(  t\right)  \right\Vert ^{2}\leq\frac{C^{2}}{2}+\int_{t_{1}}%
^{t}h\left(  s\right)  \left\Vert g\left(  s\right)  \right\Vert \left\Vert
x\left(  s\right)  -x^{\ast}+h\left(  s\right)  \dot{x}\left(  s\right)
\right\Vert ds.
\]
Hence by applying the Gronwall-Bellman lemma we obtain%

\[
\left\Vert x\left(  t\right)  -x^{\ast}+h\left(  t\right)  \dot{x}\left(
t\right)  \right\Vert \leq C+\int_{t_{1}}^{t}h\left(  s\right)  \left\Vert
g\left(  s\right)  \right\Vert ds,
\]
which implies, thanks to (\ref{q2}), that%
\begin{equation}
M_{1}=\sup_{t\geq0}\left\Vert x\left(  t\right)  -x^{\ast}+h\left(  t\right)
\dot{x}\left(  t\right)  \right\Vert <+\infty\label{bm}%
\end{equation}
Returning to (\ref{b3}), we then infer that%
\begin{align*}
\sup_{t\geq0}\left(  h\left(  t\right)  \right)  ^{2}\left(  \Phi\left(
x\left(  t\right)  \right)  -\Phi^{\ast}\right)   &  \leq C^{2}+2M_{1}%
\int_{t_{1}}^{+\infty}h\left(  s\right)  \left\Vert g\left(  s\right)
\right\Vert ds\\
&  <+\infty.
\end{align*}
Therefore, we deduce from the expression (\ref{b})\ of the function
$\mathcal{E}$, that
\begin{equation}
\sup_{t\geq0}\left\vert \mathcal{E}(t)\right\vert <+\infty\label{a}%
\end{equation}
Hence by integrating the inequality (\ref{b2}) on $\left[  t_{1},t\right]  $
with $t\geq t_{1},$ we infer that%
\begin{equation}
\int_{0}^{+\infty}h\left(  t\right)  \left(  \Phi\left(  x\left(  t\right)
\right)  -\Phi^{\ast}\right)  dt<+\infty. \label{a1}%
\end{equation}
Taking the inner product of (\ref{aa}) with $\dot{x}\left(  t\right)  ,$ we
obtain
\begin{align*}
\left\langle \ddot{x}\left(  t\right)  +\nabla\Phi\left(  x\left(  t\right)
\right)  ,\dot{x}\left(  t\right)  \right\rangle +\gamma\left(  t\right)
\left\Vert \dot{x}\left(  t\right)  \right\Vert ^{2}  &  =\left\langle
g\left(  t\right)  ,\dot{x}\left(  t\right)  \right\rangle \\
&  \leq\left\Vert g\left(  t\right)  \right\Vert \left\Vert \dot{x}\left(
t\right)  \right\Vert \\
&  \leq\sqrt{2}\left\Vert g\left(  t\right)  \right\Vert W(t).
\end{align*}
Multiplying the last inequality by $h^{2}(t)$ and using the fact that
\[
\dot{W}(t)=\left\langle \ddot{x}\left(  t\right)  +\nabla\Phi\left(  x\left(
t\right)  \right)  ,\dot{x}\left(  t\right)  \right\rangle ,
\]
we get after integration by parts on $[0,t],$%
\begin{align*}
&  \left(  h\left(  t\right)  \right)  ^{2}W(t)+\int_{0}^{t}\left(
\gamma\left(  s\right)  \left(  h\left(  s\right)  \right)  ^{2}-\dot
{h}\left(  s\right)  \left(  h\left(  s\right)  \right)  \right)  \left\Vert
\dot{x}\left(  s\right)  \right\Vert ^{2}ds\\
&  \leq h(0)W(0)+\int_{0}^{t}2\dot{h}\left(  s\right)  h\left(  s\right)
\left[  \Phi\left(  x\left(  s\right)  \right)  -\Phi^{\ast}\right]
ds+\sqrt{2}\int_{0}^{t}\left(  h\left(  s\right)  \right)  ^{2}\left\Vert
g\left(  s\right)  \right\Vert \sqrt{W(s)}ds.
\end{align*}
Using now (\ref{q1}), the fact the function $\dot{h}$ is bounded, and
(\ref{a1}), we obtain%
\[
\left(  h\left(  t\right)  \right)  ^{2}W(t)+\int_{0}^{t}h\left(  s\right)
\left\Vert \dot{x}\left(  s\right)  \right\Vert ^{2}ds\leq C+\sqrt{2}\int
_{0}^{t}\left(  h\left(  s\right)  \right)  ^{2}\left\Vert g\left(  s\right)
\right\Vert \sqrt{W(s)}ds,
\]
where $C$ is an absolute constant.

\noindent Hence by applying Gronwall-Bellman lemma with
$\omega=h\sqrt{W}$ and using the fact that
\[
\int_{0}^{\div\infty}h\left(  s\right)  \left\Vert g\left(  s\right)
\right\Vert ds<+\infty,
\]
we deduce that
\begin{equation}
\sup_{t\geq0}h(t)\sqrt{W(t)}<+\infty\label{bm1}%
\end{equation}
(which is equivalent to (\ref{c1})) and therefore
\begin{equation}
\int_{0}^{+\infty}h(s)\left\Vert \dot{x}\left(  s\right)  \right\Vert
^{2}ds<+\infty. \label{a2}%
\end{equation}
Combining (\ref{a1}) and (\ref{a2}), we get (\ref{c2}). Let us now prove the
weak convergence of $x(t)$ as $t\rightarrow+\infty.$ We first notice that
since $W(t)\rightarrow0$ as $t\rightarrow+\infty$ and $\Phi$ is weak lower
semi-continuous (in fact $\Phi$ is continuous and convex), then the first item
i) of Opial's lemma is satisfied with $S=\arg\min\Phi.$ Hence, it remains to
prove that for any $x^{\ast}$ in $\arg\min\Phi,$ the associated function
$z(t):=\frac{1}{2}\left\Vert x(t)-x^{\ast}\right\Vert ^{2}$ converges as
$t\rightarrow+\infty.$ A simple calculation using (\ref{aa}) gives
\[
\ddot{z}\left(  t\right)  +\gamma\left(  t\right)  \dot{z}\left(  t\right)
=\left\Vert \dot{x}\left(  t\right)  \right\Vert ^{2}-\left\langle \nabla
\Phi(x(t)),x\left(  t\right)  -x^{\ast}\right\rangle +\left\langle
g(t),x\left(  t\right)  -x^{\ast}\right\rangle .
\]
Hence by using the monotonicity property of the operator $\nabla\Phi,$ the
fact that $\nabla\Phi(x^{\ast})=0,$ and the Cauchy-Schwarz inequality we get%
\begin{equation}
\ddot{z}\left(  t\right)  +\gamma\left(  t\right)  \dot{z}\left(  t\right)
\leq\left\Vert \dot{x}\left(  t\right)  \right\Vert ^{2}+\left\Vert x\left(
t\right)  -x^{\ast}\right\Vert \left\Vert g\left(  t\right)  \right\Vert
=k(t). \label{A}%
\end{equation}
Combining (\ref{bm}) and (\ref{bm1}), we deduce that
\[
\sup_{t\geq0}\left\Vert x\left(  t\right)  -x^{\ast}\right\Vert <+\infty,
\]
which implies, thanks to (\ref{opt}) and (\ref{a2}), that
\begin{equation}
\int_{0}^{+\infty}\frac{1}{\gamma(t)}k(t)dt<+\infty. \label{B}%
\end{equation}
Multiply now the inequality (\ref{A}) by $e^{\Gamma\left(  t\right)  }$ and
integrate over $\left[  0,t\right]  ,$ we get after simplification the
following inequality
\begin{equation}
\dot{z}\left(  t\right)  \leq K(t):=e^{-\Gamma\left(  t\right)  }%
z(0)+e^{-\Gamma\left(  t\right)  }\int_{0}^{t}e^{\Gamma(s)}k\left(  s\right)
ds. \label{C}%
\end{equation}
By Fubini theorem, we have
\[
\int_{0}^{+\infty}K\left(  t\right)  dt=z(0)\int_{0}^{+\infty}e^{-\Gamma
\left(  t\right)  }dt+\int_{0}^{+\infty}k(s)h(s)ds,
\]
where $h$ is the function defined by (\ref{q}) at the beginning of the proof.
Hence, by using (\ref{q2}) and (\ref{B}) we deduce that the function $K,$ and
therefore the positive part $[\dot{z}]^{+}(t)$ of $\dot{z}(t)$ belongs to the space $L^{1}\left(
0,+\infty\right)  .$ Then the limit of $z\left(  t\right)  $ as $t\rightarrow
+\infty$ exists. This proves the item ii) of the Opial's lemma and completes
the proof of Theorem \ref{Th1}.

\section{Proof of Theorem \ref{Th2}}

By differentiating the energy function $W$ and using the equation (\ref{aa}),
we obtain
\begin{align}
\dot{W}(t) &  =\left\langle \ddot{x}\left(  t\right)  +\nabla\Phi\left(
x\left(  t\right)  \right)  ,\dot{x}\left(  t\right)  \right\rangle
\nonumber\\
&  =-\gamma(t)\left\Vert \dot{x}\left(  t\right)  \right\Vert ^{2}+\langle
g(t),\dot{x}\left(  t\right)  \rangle\nonumber\\
&  \leq-\gamma(t)\left\Vert \dot{x}\left(  t\right)  \right\Vert
^{2}+\left\Vert g(t)\right\Vert \sqrt{2W(t)}\label{Ka}\\
&  \leq\left\Vert g(t)\right\Vert \sqrt{2W(t)}\label{Kb}%
\end{align}
Hence the function $\rho(t):=(1+t)^{2\nu}W(t)$ satisfies the differential
inequality%
\begin{equation}
\dot\rho(t)\leq2\nu(1+t)^{2\nu-1}W(t)+\sqrt{2}(1+t)^{\nu}\left\Vert
g(t)\right\Vert \sqrt{\rho(t)}.\label{e1}%
\end{equation}
Now since $2\nu-1\leq\alpha,$ we have from (\ref{c2}),
\begin{equation}
\int_{0}^{+\infty}(1+t)^{2\nu-1}W(t)dt<\infty.\label{e2}%
\end{equation}
Thus by integrating the differential inequality (\ref{e1}) and applying
Gronwall-Bellman lemma we deduce that $\sup_{t\geq0}\rho(t)<\infty.$ Therefore
(\ref{e1}) and (\ref{e2}) imply that the positive part $[\dot\rho
(t)]^{+}$ of $\dot\rho(t)$ belongs to $L^{1}(0,+\infty)$. Thus $\rho(t)$ converges as $t\rightarrow+\infty$ to some real number
$\lambda$ which in view of (\ref{e2}) must be equal to $0.$ This proves
(\ref{eq0}). Now multiply (\ref{Ka}) by $(1+t)^{2\nu}$ and then integrate on
$[0,t]$ with $t>0,$ we obtain%
\begin{align*}
\int_{0}^{t}(1+s)^{2\nu}\gamma(s)\left\Vert \dot{x}\left(  s\right)
\right\Vert ^{2}ds &  \leq\sup_{s\geq0}\sqrt{2(1+s)^{2\nu}W(s)}\int
_{0}^{+\infty}(1+s)^{\nu}\left\Vert g(s)\right\Vert ds\\
&  +W(0)-(1+t)^{2\nu}W(t)+2\nu\int_{0}^{+\infty}(1+s)^{2\nu-1}W(s)ds,
\end{align*}
which implies (\ref{eq00}) thanks to (\ref{e2}).

\section{Proof of Theorem \ref{Th3}}

We follow the same method used in the proof of \cite[Theorem 3.1]{3}. The assumption $int(\arg\min
\Phi)\neq\varnothing$ implies the existence of $z_{0}\in\mathcal{H}$ and $r>0$
such that for any $v\in\mathcal{H}$ with $\left\Vert v\right\Vert \leq1$ we
have $\nabla(z_{0}+rv)=0$ which implies by the monotocity property of
$\nabla\Phi$ that for any $z\in\mathcal{H}$ we have $\langle\nabla
\Phi(z),z-z_{0}-rv\rangle\geq0.$ Thus by taking the supremum on $v,$ we get%
\[
\langle\nabla\Phi(z),z-z_{0}\rangle\geq r\left\Vert \nabla\Phi\right\Vert .
\]
Hence (\ref{b1}) with $x^{\ast}=z_{0}$ gives
\[
\mathcal{E}^{\prime}\left(  t\right)  +2h(t)r\left\Vert \nabla\Phi
(x(t))\right\Vert \leq4\dot{h}\left(  t\right)  h\left(  t\right)  \left(
\Phi\left(  x\left(  t\right)  \right)  -\Phi^{\ast}\right)  .
\]
Integrating this inequality on $[0,t]$\ and using (\ref{q2}),
(\ref{a}), (\ref{a1}), and the boundedness of $\dot{h},$ we deduce
that
\begin{equation}
\int_{0}^{+\infty}\frac{1}{\gamma\left(  t\right)  }\left\Vert \nabla
\Phi(x(t))\right\Vert dt<+\infty.\label{h1}%
\end{equation}
Setting $\omega(t)=g(t)-\nabla\Phi(x(t)),$ the equation (\ref{aa})\ becomes%

\[
\ddot{x}\left(  t\right)  +\gamma\left(  t\right)  \dot{x}\left(  t\right)
=\omega\left(  t\right)  .
\]
Hence thanks to (\ref{h1}), the following lemma completes the proof of Theorem
\ref{Th3}.

\begin{lemma}
\label{lem2}Let $\omega:[0,+\infty\lbrack\rightarrow\mathcal{H}$ be a
measurable function that satisfies
\[
\int_{a}^{+\infty}\frac{1}{\gamma\left(  t\right)  }\left\Vert \omega\left(
t\right)  \right\Vert dt<+\infty.
\]
If $y\in W_{loc}^{2,1}(0,+\infty;\mathcal{H})$ is a solution of the
differential equation%
\begin{equation}
\ddot{y}\left(  t\right)  +\gamma\left(  t\right)  \dot{y}\left(  t\right)
=\omega\left(  t\right)  , \label{h}%
\end{equation}
then $y\left(  t\right)  $ converge strongly in $\mathcal{H}$ as
$t\rightarrow+\infty.$
\end{lemma}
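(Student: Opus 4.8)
The plan is to treat (\ref{h}) as a linear first order equation for the velocity $v:=\dot y$ and to prove the stronger statement that $\dot y\in L^{1}(0,+\infty;\mathcal H)$. Once this is established, strong convergence is immediate: writing $y(t)=y(0)+\int_{0}^{t}\dot y(s)\,ds$, the Bochner integrability of $\dot y$ makes $t\mapsto y(t)$ a Cauchy function as $t\to+\infty$, hence convergent in $\mathcal H$.

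First I would multiply (\ref{h}) by the integrating factor $e^{\Gamma(t)}$, where $\Gamma(t)=\int_{0}^{t}\gamma(s)\,ds$ is the function already introduced in the definition of $h$, so that the equation becomes $\frac{d}{dt}\bigl(e^{\Gamma(t)}\dot y(t)\bigr)=e^{\Gamma(t)}\omega(t)$. Integrating on $[0,t]$ and dividing by $e^{\Gamma(t)}$ yields the explicit representation
\[
\dot y(t)=e^{-\Gamma(t)}\dot y(0)+e^{-\Gamma(t)}\int_{0}^{t}e^{\Gamma(s)}\omega(s)\,ds ,
\]
whence, by the triangle inequality,
\[
\int_{0}^{+\infty}\|\dot y(t)\|\,dt\le \|\dot y(0)\|\int_{0}^{+\infty}e^{-\Gamma(t)}\,dt+\int_{0}^{+\infty}e^{-\Gamma(t)}\int_{0}^{t}e^{\Gamma(s)}\|\omega(s)\|\,ds\,dt .
\]
The first term on the right is finite since $\Gamma(t)=\frac{c}{1-\alpha}\bigl[(1+t)^{1-\alpha}-1\bigr]\to+\infty$, so $e^{-\Gamma}$ decays faster than any power and is integrable (the finiteness of $\int_{0}^{+\infty}e^{-\Gamma(t)}\,dt$ was already used in the proof of Theorem \ref{Th1}). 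For the double integral I would apply Fubini's theorem to the nonnegative integrand, interchanging the order of integration to obtain $\int_{0}^{+\infty}e^{\Gamma(s)}\|\omega(s)\|\bigl(\int_{s}^{+\infty}e^{-\Gamma(t)}\,dt\bigr)\,ds$; by the very definition (\ref{q}) of $h$ the inner integral equals $e^{-\Gamma(s)}h(s)$, so the double integral collapses neatly to $\int_{0}^{+\infty}h(s)\|\omega(s)\|\,ds$.

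It then remains to see that this last integral is finite, which is exactly where the hypothesis is used: by the asymptotic equivalence (\ref{q2}), $h(s)\backsim 1/\gamma(s)$ at infinity, while on any compact interval $[0,a]$ both $h$ and $1/\gamma$ are bounded and $\omega=\ddot y+\gamma\dot y$ is integrable because $y\in W^{2,1}_{loc}$. Hence $\int_{0}^{+\infty}h(s)\|\omega(s)\|\,ds<\infty$ is equivalent to the assumption $\int_{a}^{+\infty}\gamma(s)^{-1}\|\omega(s)\|\,ds<\infty$, and the two displayed terms above are finite, giving $\dot y\in L^{1}(0,+\infty;\mathcal H)$ and the desired strong convergence. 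I do not expect a genuine obstacle: the argument is essentially a streamlined version of the Fubini computation already performed for the function $K$ in the proof of Theorem \ref{Th1}. The only points needing a little care are the passage from $\int_{a}$ to $\int_{0}$ (handled by the local integrability of $\omega$ near the origin) and the legitimacy of Fubini, which holds because the integrand is nonnegative.
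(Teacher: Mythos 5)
Your proposal is correct and follows essentially the same route as the paper: the integrating factor $e^{\Gamma(t)}$ giving the explicit representation of $\dot{y}$, then Fubini's theorem collapsing the double integral to $\int_{0}^{+\infty}h(s)\left\Vert \omega(s)\right\Vert ds$, which is finite by (\ref{q2}) and the hypothesis, so that $\dot{y}\in L^{1}(0,+\infty;\mathcal{H})$ and $y(t)$ converges strongly. You merely make explicit two details the paper leaves implicit (the $L^{1}$-to-Cauchy conclusion and the integrability of $\omega$ near the origin), and you correctly write $\dot{y}(0)$ where the paper's formula has a typo ($y(0)$).
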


\begin{proof}
Multiply (\ref{h}) by $e^{\Gamma(t)}$ and integrate on $[0,t],$ we obtain
\[
\dot{y}\left(  t\right)  =e^{-\Gamma(t)}y(0)+e^{-\Gamma(t)}\int_{0}%
^{t}e^{\Gamma(s)}\omega(s)ds.
\]
Hence by using Fubini theorem and (\ref{q}) as in the proof of Theorem
\ref{Th1}, we get%
\begin{align*}
\int_{0}^{+\infty}\left\Vert \dot{y}\left(  t\right)  \right\Vert dt &
\leq\left\Vert \dot{y}(0)\right\Vert \int_{0}^{+\infty}e^{-\Gamma(t)}%
dt+\int_{0}^{+\infty}h(t)\left\Vert \omega(t)\right\Vert dt\\
&  <+\infty\text{ (thanks to (\ref{q2})).}%
\end{align*}
This completes the proof of the lemma.
\end{proof}

\section{Proof of Theorem \ref{Th4}}

Let $T>0.$ We define on $[0,T]$\ the function:%
\[
y\left(  t\right)  =\left\Vert x\left(  t\right)  \right\Vert ^{2}-\left\Vert
x\left(  T\right)  \right\Vert ^{2}-\frac{1}{2}\left\Vert x\left(  t\right)
-x\left(  T\right)  \right\Vert ^{2}.
\]
By a classical calculus using (\ref{aa}) and the fact that $\Phi$ is convex
and even, we obtain%
\begin{align*}
\ddot{y}\left(  t\right)  +\gamma\left(  t\right)  \dot{y}\left(  t\right)
&  =\left\Vert \dot{x}\left(  t\right)  \right\Vert ^{2}+\left\langle
\nabla\Phi\left(  x\left(  t\right)  \right)  ,-x\left(  T\right)
-x(t)\right\rangle +\left\langle g\left(  t\right)  ,x\left(  t\right)
+x\left(  T\right)  \right\rangle \\
&  \leq\left\Vert \dot{x}\left(  t\right)  \right\Vert ^{2}+\Phi
(-x(T))-\Phi(x(t))+M\left\Vert g(t)\right\Vert \\
&  =\left\Vert \dot{x}\left(  t\right)  \right\Vert ^{2}+\Phi(x(T))-\Phi
(x(t))+M\left\Vert g(t)\right\Vert \\
&  \leq\frac{3}{2}\left\Vert \dot{x}\left(  t\right)  \right\Vert
^{2}+W(T)-W(t)+M\left\Vert g(t)\right\Vert ,
\end{align*}
where $M=2\sup_{s\geq0}\left\Vert x(s)\right\Vert $ (Recall that from Theorem
\ref{Th1} we have $x\in L^{\infty}(0,+\infty,\mathcal{H})).$

\noindent Using now the inequality (\ref{Kb}), we get%
\[
\ddot{y}\left(  t\right)  +\gamma\left(  t\right)  \dot{y}\left(  t\right)
\leq\frac{3}{2}\left\Vert \dot{x}\left(  t\right)  \right\Vert ^{2}%
+M\left\Vert g(t)\right\Vert +\sqrt{2}\int_{t}^{T}\left\Vert g(s)\right\Vert
\sqrt{W(s)}ds.
\]
Applying Theorem \ref{Th2} with $\nu=\frac{1+\alpha}{2},$ we deduce the
existence of an absolute constant $C$ such that%
\[
\ddot{y}\left(  t\right)  +\gamma\left(  t\right)  \dot{y}\left(  t\right)
\leq\frac{3}{2}\left\Vert \dot{x}\left(  t\right)  \right\Vert ^{2}%
+M\left\Vert g(t)\right\Vert +C\int_{t}^{+\infty}(1+s)^{-\frac{1+\alpha}{2}%
}\left\Vert g(s)\right\Vert ds:=\omega(t).
\]
Therefore we have
\[
\dot{y}\left(  t\right)  \leq e^{-\Gamma(t)}y(0)+e^{-\Gamma(t)}\int_{0}%
^{t}e^{\Gamma(s)}\omega(s)ds.
\]
Integrating this inequality and using the fact that $y(T)=0,$ we obtain%
\begin{align*}
-y(t) &  =\int_{t}^{T}\dot{y}\left(  s\right)  ds\\
&  \leq\left\vert y(0)\right\vert \int_{t}^{+\infty}e^{-\Gamma(s)}ds+\int
_{t}^{+\infty}e^{-\Gamma(s)}\int_{0}^{s}e^{\Gamma(\tau)}\omega(\tau)d\tau ds\\
&  :=\varpi(t).
\end{align*}
Therefore, for every $t$ in $[0,T],$%
\begin{equation}
\frac{1}{2}\left\Vert x\left(  t\right)  -x\left(  T\right)  \right\Vert
^{2}\leq\left\Vert x\left(  t\right)  \right\Vert ^{2}-\left\Vert x\left(
T\right)  \right\Vert ^{2}+\varpi(t).\label{kh}%
\end{equation}
Since $\Phi$ is even and convex, we have $0\in\arg\min\left(  \Phi\right)  .$
Hence by using the convergence of the function $z\left(  t\right)  =\frac
{1}{2}\left\Vert x\left(  t\right)  -x^{\ast}\right\Vert ^{2}$ proved in
Theorem \ref{Th1} with $x^{\ast}=0$, we infer that the limit of $\left\Vert
x\left(  t\right)  \right\Vert ^{2}$ as $t$ goes to $+\infty$ exists which
implies that
\begin{equation}
\lim_{t,T\rightarrow+\infty}\left\Vert x\left(  t\right)  \right\Vert
^{2}-\left\Vert x\left(  T\right)  \right\Vert ^{2}=0.\label{kh1}%
\end{equation}
On the other hand, in view of Fubini theorem and (\ref{q2}), there exists an
absolute constant $C^{\prime}\geq0$ such that%
\begin{align*}
\int_{0}^{+\infty}e^{-\Gamma(s)}\int_{0}^{s}e^{\Gamma(\tau)}\omega(\tau)d\tau
ds &  =\int_{0}^{+\infty}h(\tau)\omega(\tau)d\tau\\
&  \leq C^{\prime}\int_{0}^{+\infty}(1+\tau)^{\alpha}\omega(\tau)d\tau\\
&  =C^{\prime}\int_{0}^{+\infty}(\frac{3}{2}\left\Vert \dot{x}\left(
\tau\right)  \right\Vert ^{2}+M\left\Vert g(\tau)\right\Vert )(1+\tau
)^{\alpha}d\tau\\
&  +\frac{CC^{\prime}}{1+\alpha}\int_{0}^{+\infty}(1+s)^{\frac{1+\alpha}{2}%
}\left\Vert g(s)\right\Vert ds\\
&  <+\infty\text{ (in view of (\ref{c2})).}%
\end{align*}
Therefore%
\begin{equation}
\lim_{t\rightarrow+\infty}\varpi(t)=0.\label{kh2}%
\end{equation}
Combining (\ref{kh}), (\ref{kh1}) and (\ref{kh2}), we conclude that $x\left(
t\right)  $ satisfies the Cauchy convergence criterion in the Hilbert space
$\mathcal{H}$ as $t\rightarrow+\infty,$ and hence converges strongly in
$\mathcal{H}$ as $t\rightarrow+\infty.$

\end{document}